\theoremstyle{plain}
\newtheorem{theorem}{Theorem}[section]
\newtheorem{lemma}[theorem]{Lemma}
\theoremstyle{definition}
\newtheorem{definition}[theorem]{Definition}
\theoremstyle{remark}
\begin{document}
\null
\vspace{-1em}

\title{Continuous images of Cantor's ternary set}
\author[F.~Dreher]{F.~Dreher$^{*}$}
\author[T.~Samuel]{T.~Samuel$^{\dagger}$}

\date{\today.\\
\parbox{2.5mm}{$^{*}$}\parbox{32.5mm}{fdreher@uni-bremen.de} - Fachbereich 3 - Mathematik, Universit\"at Bremen, 28359 Bremen, Germany.\\
\parbox{2.5mm}{$^{\dagger}$}\parbox{32.5mm}{tony@math.uni-bremen.de} - Fachbereich 3 - Mathematik, Universit\"at Bremen, 28359 Bremen, Germany.}

\begin{abstract}
The Hausdorff--Alexandroff Theorem states that any compact metric space is the continuous image of Cantor's ternary set $C$. It is well known that there are compact Hausdorff spaces of cardinality equal to that of $C$ that are not continuous images of Cantor's ternary set. On the other hand, every compact countably infinite Hausdorff space is a continuous image of $C$. Here we present a compact countably infinite non-Hausdorff space which is not the continuous image of Cantor's ternary set.
\end{abstract}

\maketitle

\section{Introduction.}

One of the simplest sets that is widely studied by and most important to many mathematicians, in particular analysts and topologists, is Cantor's ternary set (also referred to as the middle third Cantor set), introduced by H.~Smith \cite{HS:1975} and by G.~Cantor \cite{GC:1883}.  Cantor's ternary set is generated by the simple recipe of dividing the unit interval $[0,1]$ into three parts, removing the open middle interval, and then continuing the process so that at each stage, each remaining subinterval is similarly subdivided into three and the middle open interval removed.  Continuing this process \textit{ad infinitum} one obtains a non-empty set consisting of an infinite number of points.
We now formally define Cantor's ternary set in arithmetic terms.

\begin{definition}\label{def:CantorSet}
\textit{Cantor's ternary set} is defined to be the set
\begin{align*}
C \mathrel{:=} \left\{ \sum_{n \in \mathbb{N}} \frac{\omega_{n}}{3^{n}} : \omega_{n} \in \{ 0, 2 \} \; \text{for all} \; n \in \mathbb{N} \right\}.
\end{align*}
\end{definition}

Throughout,  $C$ will denote Cantor's ternary set equipped with the subspace topology induced by the Euclidean metric.  Below we list some of the remarkable properties of Cantor's ternary set.  For a proof of Property (1), more commonly known as the Hausdorff--Alexandroff Theorem, we refer the reader to \cite[Theorem 30.7]{Willard:1970}.  Both F. Hausdorff \cite{Hausdorff:1927} and P. S. Alexandroff \cite{A:1927} published proofs of this result in 1927.  For Properties (2) to (8),  we refer the reader to \cite{Falconer:1990} or \cite[Counterexample 29]{SS:1978}; for a proof of Properties (9) and (10), the definition of the Lebesgue measure, Hausdorff dimension and that of a self-similar set, we refer the reader to \cite{Falconer:1990}.  For basic definitions of topological concepts see \cite{Munkres:2000} or \cite{Willard:1970}.
\begin{enumerate}[label=(\arabic*),leftmargin=0.75cm]
\item Any compact metric space is the continuous image of $C$.
\item The set $C$ is totally disconnected.
\item The set $C$ is perfect.
\item The set $C$ is compact.
\item The set $C$ is nowhere dense in the closed unit interval $[0, 1]$.
\item The set $C$ is Hausdorff.
\item The set $C$ is normal.
\item The cardinality of $C$ is equal to that of the continuum.
\item The one dimensional Lebesgue measure and outer Jordan content of $C$ are both zero.
\item The set $C$ is a self-similar set and has Hausdorff dimension equal to $\log(2)/\log(3)$.
\end{enumerate}

In this note we are interested in whether the Hausdorff--Alexandroff Theorem can be strengthened. To avoid any misunderstandings regarding the compactness condition that might stem from different naming traditions, we shall explicitly define what we mean by compact. Note that a compact space does not have to be Hausdorff.

\begin{definition}[Compact]\label{defn:compact}
Given a subset $A \subseteq X$ of a topological space $(X,\tau)$, an \textit{open cover} of $A$ is a collection of open sets whose union contains $A$.  An \textit{open subcover} is a sub-collection of an open cover whose union still contains $A$.  We call a subset $A$ of $X$ \textit{compact} if every open cover has a finite open subcover.
\end{definition}

It is known that for compact Hausdorff spaces the properties (i) metrizability, (ii) second-countability and (iii) being a continuous image of Cantor's ternary set are equivalent. This follows from the fact that a compact Hausdorff space is metrizable if and only if it is second-countable (see for instance \cite[p. 218]{Munkres:2000}) the Hausdorff--Alexandroff Theorem, and the fact that the continuous image of a compact metric space (in our case the Cantor set $C$) in a compact Hausdorff space is again a compact metrizable space (\cite[Corollary 23.2]{Willard:1970}).

Obviously the cardinality of a space that is the continuous image of $C$ cannot exceed that of the continuum. This restriction on cardinality is a necessary, but not a sufficient condition because there are compact Hausdorff spaces with cardinality equal to that of the continuum which are not second-countable, for instance the Alexandroff one-point compactification of the discrete topological space $(\mathbb{R},\mathcal{P}(\mathbb{R}))$, where $\mathcal{P}(\mathbb{R})$ denotes the power set of $\mathbb{R}$.

Restricting the cardinality even further leads to a sufficient condition.  If we only look at countably infinite target spaces, we can deduce that for compact countably infinite spaces the Hausdorff property already implies metrizability.  In a countably infinite Hausdorff space, every point is a $G_\delta$ point which together with compactness implies that the space is first-countable \cite[Problem 16.A.4]{Willard:1970}; since the space is countably infinite it follows that it is also second-countable. Hence, we have a space which is compact Hausdorff and second-countable and so, using the above mentioned equivalence, metrizable.  Therefore, any compact countably infinite Hausdorff space is the continuous image of $C$.  Is this strong restriction on the space's cardinality a sufficient condition for the Hausdorff--Alexandroff Theorem, i.e. is every compact countably infinite topological space the continuous image of $C$? This is precisely the question we address and, in fact, show that the answer is no by exhibiting a counterexample.

\begin{theorem}\label{thm:MainThm}
There exists a compact countably infinite topological space $( T,\tau )$ which is not the continuous image of $C$.
\end{theorem}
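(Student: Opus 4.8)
The plan is to let $T$ be the one-point compactification of the rationals $\mathbb{Q}$ (equivalently of $\mathbb{Q}\cap[0,1]$, if a bounded model is preferred): $T=\mathbb{Q}\cup\{\infty\}$, with open sets the open subsets of $\mathbb{Q}$ together with all sets $\{\infty\}\cup(\mathbb{Q}\setminus K)$ for $K\subseteq\mathbb{Q}$ compact. First I would record the easy facts: $T$ is countably infinite; $T$ is compact, since in any open cover some member contains $\infty$ and hence is cocompact, leaving only a compact remainder to be covered by finitely many further members; and $T$ is not Hausdorff, because $\mathbb{Q}$ is not locally compact, so no neighbourhood of a point $q\in\mathbb{Q}$ can be disjoint from a cocompact neighbourhood of $\infty$. (This non-Hausdorffness is exactly what allows $T$ to evade the metrizability conclusion forced on compact Hausdorff continuous images of $C$.) I would also note the identity we will use repeatedly: a subset of $\mathbb{Q}$ is closed in $T$ if and only if it is a compact subset of $\mathbb{Q}$.

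Next, suppose towards a contradiction that $f\colon C\to T$ is a continuous surjection. Set $A=f^{-1}(\infty)$, a nonempty closed (hence compact) subset of $C$, and $U=C\setminus A=f^{-1}(\mathbb{Q})$, which is open in $C$ and satisfies $f(U)=\mathbb{Q}$. Since $C$ has a countable basis consisting of clopen sets, we may write $U=\bigcup_{n\in\mathbb{N}}W_n$ with each $W_n$ clopen in $C$, hence compact. Then each $G_n:=f(W_n)$ is a compact subset of $\mathbb{Q}$, and $\bigcup_{n}G_n=f(U)=\mathbb{Q}$.

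The crucial step is to show that the increasing sequence $H_m:=G_1\cup\cdots\cup G_m$ witnesses that $\mathbb{Q}$ is \emph{hemicompact}. Given any compact $K\subseteq\mathbb{Q}$, the set $K$ is closed in $T$, so $f^{-1}(K)$ is closed in $C$ and therefore compact; since $\{W_n\}$ is an open cover of $U\supseteq f^{-1}(K)$, compactness forces $f^{-1}(K)$ to meet only finitely many $W_n$, say those of index at most $m$, and hence $K=f(f^{-1}(K))\subseteq G_1\cup\cdots\cup G_m=H_m$. Finally I would contradict this by showing $\mathbb{Q}$ is not hemicompact: each $H_m$, being a compact subset of $\mathbb{Q}$, has empty interior in $\mathbb{Q}$ — a compact set containing some $\mathbb{Q}\cap(a,b)$ would contain a sequence of rationals converging to an irrational in $(a,b)$, with no limit point in $\mathbb{Q}$, which is absurd — so, fixing any $x\in\mathbb{Q}$, one can pick $x_m\in(\mathbb{Q}\setminus H_m)\cap B(x,1/m)$ for every $m$, and then $\{x\}\cup\{x_m:m\in\mathbb{N}\}$ is a compact subset of $\mathbb{Q}$ contained in no $H_m$, the desired contradiction.

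The main obstacle is this final step, the non-hemicompactness of $\mathbb{Q}$: the tempting "countable union of nowhere dense sets" argument is unavailable, since $\mathbb{Q}$ is not a Baire space, so the diagonal convergent-sequence construction above is the essential device. The remainder is bookkeeping, the only mild care being the clopen decomposition of the open set $U$ and the identification of compact subsets of $\mathbb{Q}$ with the $T$-closed subsets of $\mathbb{Q}$.
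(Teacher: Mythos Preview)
Your proof is correct and takes a genuinely different route from the paper's. The paper builds a bespoke space $T=(\mathbb{N}\times\mathbb{N})\cup\{x,y\}$ whose key feature is the \emph{KC property}: every compact subset is closed (Lemma~\ref{lem:CounterEx}(c)). The paper then argues abstractly that a continuous surjection from $C$ onto any compact KC space forces the target to be Hausdorff, using only the normality of $C$: preimages of points are disjoint closed sets, separate them by open sets in $C$, push forward the complements, and use KC to get disjoint open neighbourhoods downstairs. Your argument instead exploits the zero-dimensionality of $C$ to exhaust the open set $f^{-1}(\mathbb{Q})$ by clopen (hence compact) pieces, and then reduces the problem to the non-hemicompactness of $\mathbb{Q}$, which you handle by the diagonal convergent-sequence trick.

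What each approach buys: the paper's argument is more portable --- it shows at once that \emph{no} compact non-Hausdorff KC space is a continuous image of any compact Hausdorff space, with no use of zero-dimensionality or second-countability of the source. Your example, on the other hand, is a far more natural and recognisable space, and the hemicompactness obstruction you isolate is a clean invariant: you have really shown that $\mathbb{Q}$ is not a continuous image of any open subset of $C$ (indeed, of any $\sigma$-compact locally compact Hausdorff space). One minor imprecision: ``meets only finitely many $W_n$'' should be ``is covered by finitely many $W_n$'' --- the $W_n$ may overlap --- but your conclusion $K\subseteq H_m$ follows either way.
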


In order to prove this result we require an auxiliary result, Lemma \ref{lem:CounterEx}.  In the proof of this result we rectify an error in \cite[Counterexample 99]{SS:1978}. 

The main idea behind the proof of Theorem \ref{thm:MainThm}  is to choose a specific non-Hausdorff space and to show that if there exists a continuous map from the Cantor set into this space, then the continuous map must push-forward the Hausdorff property of Cantor's ternary set, which will be a contradiction to how the target space was originally chosen.

\section{Proof of Theorem \ref{thm:MainThm}.}\label{sec:proof}

\begin{lemma}\label{lem:CounterEx}
There exists a countable topological space $(T, \tau)$ with the following properties:
\begin{enumerate}[label=(\alph{*}),leftmargin=0.75cm]
\item $(T, \tau)$ is compact,
\item $(T, \tau)$ is non-Hausdorff, and
\item every compact subset of $T$ is closed with respect to $\tau$.
\end{enumerate}
\end{lemma}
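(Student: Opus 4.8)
The plan is to write down $(T,\tau)$ explicitly, as a corrected form of the ``maximal compact topology'' of \cite[Counterexample 99]{SS:1978}. Take the underlying set to be $T \mathrel{:=} (\mathbb{N}\times\mathbb{N}) \cup \{a,b\}$ with $a\neq b$, declare every point of $\mathbb{N}\times\mathbb{N}$ to be isolated, and call a column the set $\{i\}\times\mathbb{N}$ for $i\in\mathbb{N}$. Let $\tau$ be the family of all $U\subseteq T$ such that (i) if $a\in U$, then $(\mathbb{N}\times\mathbb{N})\setminus U$ has finite intersection with every column, and (ii) if $b\in U$, then $(\mathbb{N}\times\mathbb{N})\setminus U$ is contained in a finite union of columns. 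First I would check that $\tau$ is a topology: the subsets of $\mathbb{N}\times\mathbb{N}$ meeting each column finitely, and those contained in finitely many columns, each form an ideal containing all finite sets, so conditions (i) and (ii) are preserved under finite intersections and arbitrary unions. The same remark shows that the complement of any finite set lies in $\tau$, so $(T,\tau)$ is $T_1$. (The flaw in \cite{SS:1978} is that there (i) is weakened to requiring only that $(\mathbb{N}\times\mathbb{N})\setminus U$ meet all but finitely many columns finitely; the compactness argument below shows why that version fails.)

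Properties (a) and (b) are short. For compactness: given an open cover, pick members $U_a\ni a$ and $U_b\ni b$; then $T\setminus(U_a\cup U_b)\subseteq\mathbb{N}\times\mathbb{N}$ equals $\bigl((\mathbb{N}\times\mathbb{N})\setminus U_a\bigr)\cap\bigl((\mathbb{N}\times\mathbb{N})\setminus U_b\bigr)$, which is finite because it lies inside finitely many columns and meets each of them finitely, so finitely many further cover members suffice. For the failure of Hausdorffness I would show that $a$ and $b$ cannot be separated: for any $U_a\ni a$ and $U_b\ni b$, the set $(U_a\cap U_b)\cap(\mathbb{N}\times\mathbb{N})$ is the complement of $\bigl((\mathbb{N}\times\mathbb{N})\setminus U_a\bigr)\cup\bigl((\mathbb{N}\times\mathbb{N})\setminus U_b\bigr)$, and the union of a set meeting every column finitely with a set contained in finitely many columns is never all of $\mathbb{N}\times\mathbb{N}$ — on any of the infinitely many remaining columns it omits all but finitely many points. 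Hence $U_a\cap U_b\neq\emptyset$.

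Property (c) is the substantive part, so I would prove it by cases on which of $a,b$ lies in a given compact $K\subseteq T$. If $a,b\notin K$, then $K\subseteq\mathbb{N}\times\mathbb{N}$ is discrete, hence finite, hence $\tau$-closed. If $a,b\in K$, then $T\setminus K\subseteq\mathbb{N}\times\mathbb{N}$ consists of isolated points and is open. If $K=\{a\}\cup S$ with $b\notin K$, I claim $S$ lies in finitely many columns; then $T\setminus K=\{b\}\cup\bigl((\mathbb{N}\times\mathbb{N})\setminus S\bigr)$ meets (ii), so $K$ is closed. If instead $S$ met infinitely many columns, pick one point of $S$ from each of infinitely many distinct columns to form a set $A$ meeting every column finitely; then $\{a\}\cup\bigl((\mathbb{N}\times\mathbb{N})\setminus A\bigr)$ is open, and together with the singletons $\{x\}$, $x\in A$, it covers $K$ with no finite subcover — a contradiction. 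Symmetrically, if $K=\{b\}\cup S$ with $a\notin K$, I claim $S$ meets every column finitely, so that $T\setminus K=\{a\}\cup\bigl((\mathbb{N}\times\mathbb{N})\setminus S\bigr)$ meets (i) and $K$ is closed; otherwise $S\cap(\{i_0\}\times\mathbb{N})$ is infinite for some $i_0$, and then $\{b\}\cup\bigl((\mathbb{N}\times\mathbb{N})\setminus(\{i_0\}\times\mathbb{N})\bigr)$ is open and, with the singletons of $S\cap(\{i_0\}\times\mathbb{N})$, covers $K$ with no finite subcover.

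I expect the real obstacle to be conceptual rather than technical: choosing the two neighbourhood filters so that (a), (b), and (c) hold together. The naive candidate — one copy of $\mathbb{N}$ with two points added, each having the cofinite sets as a neighbourhood basis — is compact and non-Hausdorff but violates (c) badly, since then $\{a\}\cup S$ is compact yet not closed for every infinite, co-infinite $S$. What makes the present $\tau$ work is that the two ``small-set'' ideals attached to $a$ and to $b$ are mutually dual under almost-disjointness: each member of one is almost disjoint from each member of the other, and each ideal is exactly the family of sets almost disjoint from all members of the other; crucially, neither of them is the ideal of finite sets, which would force one of the two points to be isolated and the space to be Hausdorff. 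Pinning down this pair — equivalently, correctly identifying which sets $\{a\}\cup S$ and $\{b\}\cup S$ are compact — is exactly the step mishandled in \cite[Counterexample 99]{SS:1978}, and getting it right is the crux of the argument.
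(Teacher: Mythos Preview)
Your construction is exactly the paper's space (with ``columns'' in place of the paper's ``rows'' and $a,b$ in place of $x,y$), and your arguments for (a) and (b) coincide with the paper's. For (c) the paper argues by contraposition---if $E$ is not closed it locates the missing limit point $x$ or $y$ and builds the same bad cover you build---so your direct case split on which of $a,b$ lie in $K$ is just the contrapositive reorganisation of the paper's proof, with the two nontrivial cases and their covers matching exactly.
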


\begin{proof}
This proof is based on \cite[Counterexample 99]{SS:1978}.  We define $T \mathrel{:=} (\mathbb{N}\times\mathbb{N}) \cup \left\{x,y\right\}$, namely the Cartesian product of the set of natural numbers $\mathbb{N}$ with itself unioned with two distinct arbitrary points $x$ and $y$.  We equip the set $T$ with the topology $\tau$ whose base consists of all sets of the form:
\begin{enumerate}[label=(\roman{*}),leftmargin=0.75cm]
\item $\{(m,n)\}$, where $(m,n)\in \mathbb{N}\times\mathbb{N}$,
\item $T \setminus A$, where $A \subset (\mathbb{N}\times\mathbb{N}) \cup \{y\}$ contains $y$ and is such that the cardinality of the set $A \cap \{ (m, n) : n \in \mathbb{N}\}$ is finite for all $m \in \mathbb{N}$; that is, the set $A$ contains at most finitely many points on each row (these sets are the open neighbourhoods of $x$) and
\item $T \setminus B$, where $B \subset (\mathbb{N}\times\mathbb{N}) \cup \{x\}$ contains $x$ and is such that there exists an $M \in \mathbb{N}$, so that if $(m, n) \in B \cap (\mathbb{N} \times \mathbb{N})$, then $m \leq M$; that is $B$ contains only points from at most finitely many rows (these sets are the open neighbourhoods of $y$).
\end{enumerate}

Property (a) follows from the observation that any open cover of $T$ contains at least one open neighbourhood $U \supseteq T\setminus A$ of $x$ and one open neighbourhood $V \supseteq T \setminus B$ of $y$ with $A$ and $B$ as given above. The points not already contained in these two open sets are contained in $T \setminus (U \cup V) \subseteq T \setminus \left(\left(T\setminus A\right)\cup \left(T\setminus B\right)\right) = A \cap B$ which, by construction, is a finite set. In this way a finite open subcover can be chosen and hence the topological space $(T, \tau)$ is compact.

To see why $(T, \tau)$ has Property (b), consider open neighbourhoods of $x$ and $y$. An open neighbourhood of $x$ contains countably infinitely many points on each row of the lattice $\mathbb{N}\times\mathbb{N}$; an open neighbourhood of $y$ contains countably infinitely many full rows. It follows that there are no disjoint open neighbourhoods $U \ni x$ and $V \ni y$ and thus $T$ is non-Hausdorff.

We use contraposition to prove Property (c). Suppose that $E \subset T$ is not closed. Note that we may assume that $E$ is a strict subset of $T$ since $T$ itself is closed by the fact that $\emptyset \in \tau$.  By construction of the topology, a set that is not closed cannot contain both $x$ and $y$. Also, there needs to be at least one point in the closure $\overline{E}$ of $E$, but not already in $E$; this has to be one of the points $x$ or $y$, because singletons $\{(m,n)\}$ which are subsets of the lattice $\mathbb{N}\times\mathbb{N}$ are open.  Thus the point $(m,n)$ cannot be a limit point of $E$. We shall now check both cases, that is, (i) if $x\in \overline{E}\setminus E$ and (ii) if $y\in \overline{E}\setminus E$.
\begin{enumerate}[label=(\roman{*}),leftmargin=0.75cm]
\item If $x\in \overline{E}\setminus E$, then every open neighbourhood of $x$ has a non-empty intersection with $E$. It follows that there is at least one row in $\mathbb{N}\times\mathbb{N}$ that shares infinitely many points with $E$. Denote this row by $B$.  Then the open cover $\{T \setminus B\} \cup \{ \{b\} : b \in B \cap E \}$ of $E$ cannot be reduced to a finite open subcover, and therefore, $E$ is not compact.
\item If $y\in \overline{E}\setminus E$, then, similar to (i), we have that $E$ contains points from infinitely many rows. Take one point from each of these rows and call the resulting set $A$. Then the open cover $\{T \setminus A\} \cup \{ \{ a \} : a \in A \cap E \}$ of $E$ cannot be reduced to a finite open subcover and hence $E$ is not compact.
\end{enumerate}
\vspace{-1.5em}
\end{proof}

\begin{proof}[Proof of \ref{thm:MainThm}]
Assume that there exists a surjective continuous map $f: C \to T$.  We will show that this implies that $\left(T,\tau\right)$ is Hausdorff which contradicts Lemma \ref{lem:CounterEx}(b).

Choose two distinct points $u, v\in T$. Since singletons are compact, Lemma \ref{lem:CounterEx}(c) implies that the sets $\left\{ u \right\}$  and $\left\{ v \right\} $ are closed in $T$ with respect to $\tau$. Therefore, their pre-images under $f$ are non-empty closed subsets of $C$ and have disjoint open neighbourhoods $U(u)$ and $U(v)$, as $C$ is normal.  The complements $U(u)^{c}$ and $U(v)^{c}$ of these open neighbourhoods are compact subsets of $C$.  Thus, their images under $f$ are compact in $T$ because $f$ is continuous and they are closed because of Lemma \ref{lem:CounterEx}(c). Therefore, $V\left( u \right)\mathrel{:=}f\left(U\left( u \right)^{c}\right)^{c}$ and $V\left( v \right)\mathrel{:=}f\left(U\left( v \right)^{c}\right)^{c}$ are open neighbourhoods of $u$ and $v$ respectively. We claim that these sets are disjoint:
\begin{align*}
V\left( u \right)\cap V\left(v \right) = f\left(U\left(u \right)^{c}\right)^{c}\cap f\left(U\left(v \right)^{c}\right)^{c} &= \left(f\left(U\left( u \right)^{c}\right)\cup f\left(U\left( v \right)^{c}\right)\right)^{c}\\
 &=\left(f\left(U\left(u \right)^{c}\cup U\left( v \right)^{c}\right)\right)^{c}\\
 &=\left(f\left(\left(U\left( u \right)\cap U\left( v \right)\right)^{c}\right)\right)^{c}
 =\left(f\left(\emptyset^{c}\right)\right)^{c}
 = \emptyset.
\end{align*}
Hence we have separated the points $u$ and $v$ by open neighbourhoods. Since $u, v \in T$ were chosen arbitrarily, we conclude that $\left(T,\tau\right)$ is Hausdorff, giving the desired contradiction.
\end{proof}

\end{document}